\documentclass[10pt,reqno]{amsart}
\usepackage[utf8]{inputenc}
\usepackage{amsopn,amssymb,mathrsfs,xcolor,url,enumitem,accents}
\usepackage[abbrev]{amsrefs} 
\usepackage[a4paper,lmargin=3cm,rmargin=3cm,tmargin=4cm,bmargin=4cm,marginparwidth=2.8cm,marginparsep=1mm]{geometry} 
\usepackage[bookmarks=true,hyperindex,pdftex,colorlinks,citecolor=blue]{hyperref} 
\usepackage{cleveref}

\usepackage{marginnote} 

\newcommand{\restrict}{\mathord{\upharpoonright}}           

\newtheorem{theorem}{Theorem}[section]
\newtheorem{corollary}[theorem]{Corollary}
\newtheorem{lemma}[theorem]{Lemma}
\newtheorem{proposition}[theorem]{Proposition}
\newtheorem{problem}[theorem]{Problem}

\theoremstyle{definition}

\newtheorem{remark}[theorem]{Remark}

\newcommand{\nm}[1]{\vert\kern-0.25ex\vert #1 \vert\kern-0.25ex\vert}

\newcommand{\R}{\mathbb{R}}
\newcommand{\N}{\mathbb{N}}
\newcommand{\diam}{\textnormal{diam}}
\newcommand{\free}[1]{\mathcal{F}(#1)}
\newcommand{\Lipo}[1]{\textnormal{Lip}_0(#1)}

\newcommand{\cM}{\mathcal{M}}

\newcommand{\cP}{\mathcal{P}}
\newcommand{\cA}{\mathcal{A}}

\DeclareMathOperator{\Lip}{Lip}

\numberwithin{equation}{section}

\allowdisplaybreaks

\title{Lipschitz-free spaces over properly metrisable spaces and approximation properties}
\author[R. J. Smith]{Richard J.~Smith}
\address[R. J. Smith]{School of Mathematics and Statistics, University College Dublin, Belfield, Dublin 4, Ireland}
\email{richard.smith@maths.ucd.ie}
\author[F. Talimdjioski]{Filip Talimdjioski}
\address[F. Talimdjioski]{School of Mathematics and Statistics, University College Dublin, Belfield, Dublin 4, Ireland}
\email{filip.talimdjioski@ucdconnect.ie}

\date{\today}

\begin{document}
	\begin{abstract}
		Let $T$ be a topological space admitting a compatible proper metric, that is, a locally compact, separable and metrisable space. Let $\cM^T$ be the non-empty set of all proper metrics $d$ on $T$ compatible with its topology, and equip $\cM^T$ with the topology of uniform convergence, where the metrics are regarded as functions on $T^2$. We prove that the set $\cA^{T,1}$ of metrics $d\in\cM^T$ for which the Lipschitz-free space $\free{T,d}$ has the metric approximation property is a dense set in $\cM^T$, and is furthermore residual in $\cM^T$ when $T$ is zero-dimensional. We also prove that if $T$ is uncountable then the set $\cA^T_f$ of metrics $d\in\cM^T$ for which $\free{T,d}$ fails the approximation property is dense in $\cM^T$. Combining the last statement with a result of Dalet, we conclude that for any `properly metrisable' space $T$, $\cA^T_f$ is either empty or dense in $\cM^T$.
	\end{abstract}
	
	\keywords{Lipschitz-free space, Cantor set, approximation property}
	\subjclass[2020]{Primary 46B20, 46B28}
	\maketitle

	\section{Introduction}\label{sect_intro}
	
	For a metric space $(M,d)$ and a base point $x_0\in M$ we define the Banach space $\Lipo{M,x_0}$ consisting of all real-valued Lipschitz functions $f$ on $M$ that vanish at $x_0$, equipped with the norm $$\nm{f} := \sup_{x,y\in M, x\not= y} \frac{|f(x)-f(y)|}{d(x,y)}.$$ For any $x\in M$ we define the bounded linear functional $\delta_x\in \Lipo{M,x_0}^*$ by $\delta_x(f) = f(x)$, $f\in\Lipo{M,x_0}$. The closed linear span of the set $\{\delta_x : x\in M\}$ is called the \emph{Lipschitz-free space} $\free{M,x_0}$. It is well-known that $\Lipo{M,x_0}$ is isometric to the dual space of $\free{M,x_0}$ and that the isometric structure of both spaces does not depend on the choice of the base point $x_0$. We will hereafter write $\Lipo{M}$ and $\free{M}$ without specifying the base point, and call $\free{M}$ simply the free space over $M$. In the book \cite{weaver}, Weaver provides a comprehensive introduction to Lipschitz and Lipschitz-free spaces. In it, the latter are called Arens-Eells spaces and are denoted by \AE$(M)$.
	
	The investigation of the structure of Lipschitz-free spaces is an area of active ongoing research in Banach space theory. One direction of research has been the approximation properties of free spaces. Results involving the approximation property (AP) appear in \cites{godefroyozawa, hajeklancienpernecka, kalton}; results involving the bounded approximation property (BAP) appear in \cites{godefroykalton, godefroy:15, apandschd}; and metric approximation property (MAP) results can be found in in \cites{godefroykalton, adalet, godefroyozawa, douchakaufmann, cuthdoucha,fonfw,ps:15,smithtalim}. More details are available in the introduction to \cite{smithtalim}, and in \cite{godefroy:20}.
	
	We will mention several results regarding compact and proper metric spaces (the latter property means that every closed ball is compact). It is known that if $M$ is a sufficiently `thin' totally disconnected metric space then $\free{M}$ has the MAP. For example, if $M$ is a countable proper metric space then $\free{M}$ has the MAP \cite{adalet}. Also, as a corollary of \cite{weaver}*{Corollary 4.39} and \cite{cuthdoucha}*{Corollary 16}, $\free{M}$ has the MAP when $M$ is compact and uniformly disconnected, meaning that there exists $0 < a \leq 1$ such that for every distinct $p,q\in M$, we can find complementary clopen sets $C,D \subseteq M$ such that $p\in C, q\in D$ and $d(C,D) \geq a d(p,q)$, where $d(C,D) = \inf \{d(x,y) : x\in C, y\in D\}$. For example, the usual middle-thirds Cantor subset of $[0,1]$ has this property. Moreover, as a corollary of \cite{apandschd}*{Proposition 2.2} and \cite{purely1unrect}*{Theorem B}, $\free{M}$ has the MAP when $M$ is doubling and purely 1-unrectifiable, the latter property being equivalent to the condition that $M$ contains no bi-Lipschitz image of a compact subset of $\R$ of positive measure. On the other hand, in \cite{godefroyozawa}, G.~Godefroy and N.~Ozawa constructed a compact convex subset $C$ of a separable Banach space such that $\free{C}$ fails the AP. Also, by a result in \cite{hajeklancienpernecka}, there exists a metric space $M$ homeomorphic to the Cantor space such that $\free{M}$ fails the AP. The recent result \cite{douchakaufmann} shows that free spaces over compact groups equipped with a left-invariant metric have the MAP. A characterisation of the BAP on free spaces over compact metric spaces, in terms of uniformly bounded sequences of Lipschitz `almost extension' operators, is given in \cite{godefroy:15}.
	
	In \cite{godefroysurvey}, G.~Godefroy surveys various aspects of the theory of free spaces, including the lifting property for separable Banach spaces, approximation properties of free spaces, and norm attainment of Lipschitz functions and operators. In the last section he states a number of open problems, several of which concern approximation properties of free spaces.
	
	For a topological space $T$, we denote by $\cM^T$ the set of all proper metrics on $T$ compatible with its topology. Note that every proper metric is complete. By \cite{engelking}*{Exercise 4.2.C} $\cM^T\not=\emptyset$, i.e.~$T$ is `properly metrisable', if and only if $T$ is locally compact, separable and metrisable. Hereafter we suppose that $T$ always belongs to this class of topological spaces. Note that if $T$ is compact and metrisable then $\cM^T$ is the set of all metrics on $T$ compatible with its topology. Let us consider the vector space $C(T^2)$ of continuous (possibly unbounded) real-valued functions on $T^2$. We equip $C(T^2)$ with the topology of uniform convergence, which according to \cite{engelking}*{Theorem 4.2.20} is induced by the metric $\rho_T$ given by
		\[
		\rho_T(f,g) = \sup_{(x,y)\in T^2} \min(1,|f(x,y)-g(x,y)|), \quad f, g \in C(T^2).
		\]
	It can be shown that $\cM^T$, equipped with the subspace topology inherited from $(C(T^2), \rho_T)$, is a completely metrisable space, and hence a Baire space (a proof is given in the next section). We note that the compact-open topology, which in this context is the topology of uniform convergence on compact sets, is another natural candidate with which to endow $\cM^T$. Indeed, $C(T^2)$ endowed with the compact-open topology is a Polish space \cite{engelking}*{3.4.H (a) and 4.3.F (a)}. However, $\cM^T$ with this topology is a Baire space if and only if $T$ is compact \cite{koshino2022}*{Proposition 3.2}. For this reason we prefer to work with the topology of uniform convergence; evidently, if $T$ is compact then the two topologies agree.
		
	Let us define the sets
	\begin{align*}
		\cA^{T,\lambda} &= \{d\in\cM^T \; :\; \free{T,d} \text{ has the } \lambda\text{-BAP}\}, \text{ for } \lambda \in [1,\infty), \\
		\text{and}\quad\cA^T_f &= \{d\in\cM^T \; :\; \free{T,d} \text{ fails the AP} \}.
	\end{align*}
	
	In \cite{godefroysurvey}*{Problem 6.6}, Godefroy considered the set $\cM^K$, where $K=2^\N$ is the Cantor space equipped with the standard (product) topology. He asked about the topological nature of the set $\cA_f^K$ and, in particular, whether $\cA_f^K$ is a residual set in $\cM^K$. The latter question has a negative answer because $\cA^{K,1}$ is a residual ($F_{{\sigma\delta}}$) set in $\cM^K$ \cite{talimpublished}*{Theorem 1.1}. In addition, it was shown that the necessarily meager set $\cA^K_f$ is dense in $\cM^K$ \cite{talimpublished}*{Proposition 1.2}. In this paper we consider Godefroy's question with respect to properly metrisable spaces $T$. In our opinion this question is natural to consider, given that proper metric spaces include all compact metric spaces and closed subsets of $\R^n$. Because they form a wide class of relatively well-behaved metric spaces, they are an important special case in Lipschitz-free space theory; see e.g.~\cite{adalet}. Related to this is \cite{godefroysurvey}*{Problem 6.4}, which asks whether $\free{M}$ has the MAP whenever $(M,d)$ is a (closed) subset of $\R^n$ and $d$ is induced by a norm on $\R^n$. Our main results, which we list below, generalise those of \cite{talimpublished}.
	
	\begin{theorem}\label{thm:mapdense}
		The set $\cA^{T,1}$ is dense in $\cM^T$.
	\end{theorem}
	
	\Cref{thm:mapdense} generalizes the fact that, by \cite{ishiki2021dense}*{Theorem 1.1} and \cite{apandschd}*{Proposition 2.2}, $\bigcup_{n=1}^\infty \cA^{T,n}$ is dense in $\cM^T$ whenever $T$ is compact and has finite covering dimension.
	
	\begin{theorem}\label{thm:failapdense}
		If $T$ is uncountable, then $\cA^T_f$ is dense in $\cM^T$.
	\end{theorem}
	
	\begin{proposition}\label{prop:residual}
			If $T$ is zero-dimensional then $\cA^{T,1}$ is residual in $\cM^T$. Consequently, $\cA^T_f$ is meager in $\cM^T$.
	\end{proposition}
	
	Finally, we combine \Cref{thm:failapdense} with one in \cite{adalet} to arrive at a dichotomy.
	
	\begin{corollary}
		Let $T$ be a locally compact, separable, and metrisable space. Then either $T$ is countable and $\cA^{T,1} = \cM^T$, or $T$ is uncountable and $\cA^T_f$ is dense in $\cM^T$.
	\end{corollary}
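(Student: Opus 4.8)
The plan is to split into cases according to whether $T$ is countable or uncountable. This dichotomy is exhaustive and exclusive, and crucially it is a property of the underlying set of $T$ alone, independent of any choice of compatible metric. If $T$ is uncountable, then there is nothing new to do: Theorem~\ref{thm:main} applies verbatim and delivers that $\cA^T_f$ is dense in $\cM^T$, which is the second alternative.

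The only case requiring an argument is the countable one, where I would show that \emph{every} admissible metric yields a free space with the $1$-BAP. Fix an arbitrary $d\in\cM^T$. By the definition of $\cM^T$, the metric $d$ is proper and compatible with the topology, so $(T,d)$ is a proper metric space; moreover $(T,d)$ is countable precisely because $T$ is. Hence $(T,d)$ is a countable proper metric space, and Dalet's theorem quoted in the introduction (\cite{adalet}) guarantees that $\free{T,d}$ has the MAP. Since the MAP is exactly the $1$-BAP, we get $d\in\cA^{T,1}$; as $d\in\cM^T$ was arbitrary this gives $\cA^{T,1}=\cM^T$, the first alternative. (One may note in passing, to align with the abstract, that MAP implies AP, so in this case $\cA^T_f=\emptyset$.)

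I do not expect any genuine obstacle here: the substantive content of the corollary is entirely carried by Theorem~\ref{thm:main} and by Dalet's theorem, and the deduction itself is routine. The only points demanding a moment of care are the harmless observations that for countable $T$ every $d\in\cM^T$ simultaneously makes $(T,d)$ countable and proper, so that Dalet's hypotheses are met for all such $d$, and that the MAP coincides with the $1$-BAP so that membership in $\cA^{T,1}$ follows. With these in hand the two cases combine immediately to give the stated dichotomy.
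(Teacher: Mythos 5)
Your proposal is correct and matches the paper's proof exactly: the paper likewise handles the countable case by citing Dalet's theorem (\cite{adalet}*{Theorem 2.6}, MAP $=$ $1$-BAP for countable proper metric spaces) and the uncountable case by \Cref{thm:main}. Your write-up merely spells out the routine details that the paper's one-line proof leaves implicit.
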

	\begin{proof}
		The first half of the statement follows from \cite{adalet}*{Theorem 2.6}, and the second half from \Cref{thm:failapdense}.
	\end{proof}
	
	Again by \cite{adalet}*{Theorem 2.6}, we observe that \Cref{thm:mapdense} provides new information when $T$ is uncountable.
	
	The rest of the paper is organised as follows. Section \ref{sect_anc} concerns notation and preliminary results, and \Cref{sec_main} is devoted to the proofs of \Cref{thm:mapdense,thm:failapdense} and \Cref{prop:residual}, and ends with an open problem.
	
	\section{Notation and preliminary results}\label{sect_anc}
	
	For a metric space $(M,d)$, $x\in M$, and $r>0$ we write
	\[
	B^{d,o}_r(x) = \{y\in M : d(x,y) < r\}, \quad\text{and}\quad B^d_r(x) = \{y\in M : d(x,y) \leq r\}. 
	\]
	If there is no ambiguity about the metric $d$ we write $B^o_r(x)$ and $B_r(x)$. If $A,B\subseteq M$ then write
	\begin{align*}
		d(A,B) = \inf_{x\in A,y\in B} d(x,y),&\quad d(x,A) = d(\{x\},A) \\ D(A,B) = \sup_{x\in A,y\in B} d(x,y),&\quad D(x,A) = D(\{x\},A).
	\end{align*}
	If $X$ is a Banach space then $B_X$ denotes the closed unit ball of $X$.
	
	For a real-valued Lipschitz function $f$ on $(M,d)$, we write $\Lip_d(f)$ for the (optimal) Lipschitz constant of $f$ with respect to $d$. The Lipschitz space and free space over $(M,d)$ are denoted by $\Lipo{M,d}$ and $\free{M,d}$, respectively. If there is no ambiguity about the metric $d$ we omit it from the notation. For a bounded real-valued function $g$ defined on a set $U$, we write $\nm g _\infty = \sup_{x\in U} |g(x)|$. If $A\subseteq U$ is a subset then we write $g\restrict_A$ for the restriction of $g$ to $A$. We call two metric spaces $(A,d)$ and $(B,e)$ \emph{proportional} if there exists a surjection $f\colon A\to B$ and $c>0$ such that $d(x,y) = ce(f(x),f(y))$ for all $x,y\in A$. For a non-empty set $S$, we call a finite family $\{S_1,\ldots,S_n\}$ of non-empty subsets of $S$ a \emph{partition} of $S$ if $S_i\cap S_j = \emptyset$ whenever $i\not=j$ and $\bigcup_{i=1}^n S_i = S$.
	
	Throughout the rest of the paper $T$ will be a fixed locally compact, separable, and metrisable topological space with base point $x_0\in T$. It will help to fix a metric $\alpha \in \cM^T$. For $r>0$ and $d\in\cM^T$ we write $B_r^d$ and $B_r^{d,o}$ for $B_r^d(x_0)$ and $B_r^{d,o}(x_0)$, respectively. We will first prove that $\cM^T$, equipped with the topology of uniform convergence, is completely metrisable. Note that by \cite{engelking}*{Theorem 4.3.13}, $(C(T^2),\rho_T)$ is complete, since the metric $(x,y)\mapsto \min(1,|x-y|)$ on $\R$ is bounded and complete. We remark that the result below can be proved using \cite{ishiki2020interpolation}*{Lemma 5.1}; however, we use an argument in the proof below later in the proof of \Cref{lm:closedboundedcompl}, so we prefer to keep things as they are.
	
	\begin{proposition}[{cf.~\cite{ishiki2020interpolation}*{Lemma 5.1}}]\label{lm:cmtisgdelta}
		The set $\cM^T$ is a $G_\delta$ subset of $(C(T^2),\rho_T)$ and thus completely metrisable.
	\end{proposition}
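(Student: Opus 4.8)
The plan is to reduce membership in $\cM^T$ to a conjunction of conditions whose topological complexity I can control directly, and then exhibit each as a $G_\delta$ subset (or the complement of an $F_\sigma$ subset) of $(C(T^2),\rho_T)$. The starting observation is that, since $\rho_T$-convergence at distances below $1$ coincides with genuine uniform convergence on $T^2$, the set $\cP$ of continuous pseudometrics (those $d\in C(T^2)$ with $d\ge 0$, $d(x,x)=0$, symmetry, and the triangle inequality) is closed in $(C(T^2),\rho_T)$, since each defining relation passes to uniform limits. I would then fix any metric $\varrho$ on $T$ compatible with $\tau_T$ (available since $T$ is metrisable), a basepoint $t_0\in T$, and an increasing compact exhaustion $T=\bigcup_m K_m$ with $K_m\subseteq\mathrm{int}\,K_{m+1}$, which exists because $T$ is $\sigma$-compact.

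The conceptual heart of the argument is the reformulation that a $d\in\cP$ lies in $\cM^T$ if and only if it (i) separates points and (ii) has all closed balls $B^d_r(x)$ compact; crucially, compatibility of the induced metric topology with $\tau_T$ is then \emph{automatic} and need not be tested separately. Indeed, continuity of $d$ forces the $d$-topology to be coarser than $\tau_T$, so on each compact ball the identity is a continuous bijection onto a Hausdorff space and hence a homeomorphism; comparing the two topologies on $B^d_1(x)$ then shows that every $\tau_T$-neighbourhood of $x$ contains a $d$-ball, giving equality of the topologies. This lets me avoid the awkward, inherently non-countable task of directly encoding topological compatibility.

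For properness, since each closed ball is $\tau_T$-closed and is contained in a larger ball centred at $t_0$, it suffices that $B^d_n(t_0)$ be compact for every $n\in\N$. A compact subset of $T$ lies in some $K_m$, while a $\tau_T$-closed subset of the compact $K_m$ is compact; hence $B^d_n(t_0)$ is compact exactly when it is contained in some $K_m$. Writing $\Phi_m(d)=\inf\{d(t_0,y):y\in T\setminus K_m\}$ (with $T$ non-compact, so the complement is non-empty; the compact case is trivial, properness being automatic there), one checks that $\Phi_m$ is $1$-Lipschitz for the uniform metric and hence $\rho_T$-continuous, and properness translates into $\bigcap_n\bigcup_m\{d:\Phi_m(d)>n\}$, a countable intersection of open sets.

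The remaining and most delicate condition is point-separation, whose failure reads $\exists\,x\neq y$ with $d(x,y)=0$. The naive attempt to present this as an $F_\sigma$ is obstructed by non-compactness: a witnessing pair $(x_j,y_j)$ for a sequence $d_j\to d$ may escape to infinity and admit no convergent subsequence. I would circumvent this by localising to the exhaustion, setting $F_{k,m}=\{d\in\cP:\exists\,(x,y)\in K_m\times K_m,\ \varrho(x,y)\ge 1/k,\ d(x,y)=0\}$. Now compactness of $K_m\times K_m$ lets a subsequence of witnesses converge, and uniform convergence $d_j\to d$ forces $d(x,y)=0$ at the limit, so each $F_{k,m}$ is closed. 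Since any two distinct points lie in a common $K_m$ and are $\varrho$-separated by some $1/k$, the failure set equals $\bigcup_{k,m}F_{k,m}$, an $F_\sigma$; thus separation is a $G_\delta$ condition. Intersecting the three $G_\delta$ sets — the closed $\cP$, the properness set, and the complement of $\bigcup_{k,m}F_{k,m}$ — exhibits $\cM^T$ as a $G_\delta$ in the completely metrisable space $(C(T^2),\rho_T)$, whence, by the classical fact that a $G_\delta$ in a completely metrisable space is completely metrisable, $\cM^T$ is itself completely metrisable. The main obstacle, as indicated, is handling separation without global compactness, and the compact exhaustion is exactly what resolves it.
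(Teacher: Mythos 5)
Your proof is correct, and it takes a genuinely different route from the paper's. The two arguments handle the separation condition (metric versus pseudometric) in essentially the same way, by localising to compact sets so that witnessing pairs have convergent subsequences: your closed sets $F_{k,m}$, built from a compact exhaustion, correspond to the paper's open sets $E_n$, built from the balls of a fixed reference metric $\alpha\in\cM^T$. The divergence is in how properness and compatibility are treated. The paper forms the $G_\delta$ set $\cM^*$ of all continuous, not necessarily proper or compatible, metrics on $T$ and then shows that $\cM^T$ is relatively open in $\cM^*$ by a perturbation argument: if $d\in\cM^T$, $d'\in\cM^*$ and $\rho_T(d,d')<1$, then $d'$ inherits properness and compatibility from $d$ (properness of $d$ supplies the convergent subsequences in the compatibility step). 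You instead encode properness directly as a $G_\delta$ condition through the continuous functionals $\Phi_m$, and prove the standalone topological fact that a continuous metric whose closed balls are all compact automatically induces the topology of $T$ (via the compact-to-Hausdorff homeomorphism argument); this fact, and the strict/non-strict inequality discrepancy between $\Phi_m(d)>n$ and $B^d_n(t_0)\subseteq K_m$, both check out. Your version is marginally more self-contained, needing only some compatible metric and a compact exhaustion rather than an element of $\cM^T$, and the automatic-compatibility observation is a clean reusable lemma. What the paper's relative-openness argument buys, beyond the proposition itself, is the quantitative by-product $\rho_T(\cM^T,\cM^*\setminus\cM^T)=1$ recorded in \Cref{rmk:rhotmtdist}, which is invoked later in the proof of \Cref{lm:extensionmetric} to certify that the extended metric $\tildem d$ belongs to $\cM^T$; your approach proves the proposition but does not directly produce that remark.
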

	\begin{proof}
		Let
		\begin{align*}
			A = \{f\in C(T^2) \; : \; f(x,x) &= 	f(x,y)-f(y,x) = 0, \text{ and } \\ f(x,z)&\leq f(x,y)+f(y,z) \text{ for all } x,y,z \in T\},
		\end{align*}
		and for $n\in\N$, define $$E_n = \{f\in C(T^2) \; :\; f(x,y) > 0 \text{ whenever } x,y\in B^\alpha_n, \alpha(x,y) \geq n^{-1}\}.$$ The set $A$ is closed, and hence $G_\delta$ in $C(T^2)$. Moreover, $E_n$ is open by the compactness of $B^\alpha_n$ for each $n\in\N$. Therefore the set $\cM^* = A\cap \bigcap_{n=1}^\infty E_n$ is $G_\delta$ and consists of all metrics on $T$ which are continuous as functions on $T^2$, but not necessarily proper or compatible with the topology on $T$. We claim that $\cM^T$ is an open subset of $\cM^*$. Pick $d\in \cM^T$ and suppose $d'\in \cM^*$ is such that $\rho_T(d,d') < 1$. By the continuity of $d'$ on $T^2$, the topology on $T$ induced by $d'$ is coarser than the original one. This implies that each compact subset of $T$ is also $d'$-compact. For each $n\in\N$, $B_n^{d'}$ is $d'$-closed, and hence $d$-closed. Also, $B_n^{d'} \subseteq B_{n+1}^{d}$, so $B_n^{d'}$ is $d$-compact, and hence $d'$-compact. Therefore $d'$ is proper. To show that $d'$ induces the original topology on $T$, suppose for a contradiction that $(y_n)_n\subseteq T$ is such that $d'(y_n,y)\to 0$ for some $y\in T$, but $d(y_n,y)\not\to 0$. Then there is $\epsilon > 0$ and a subsequence $(y_{n_k})_k$ such that $d(y_{n_k}, y) > \epsilon$ for all $k\in\N$. As $(y_{n_k})_k$ is $d'$-bounded, and hence $d$-bounded, there is a further subsequence $(y_{n_{k_i}})_i$ such that $d(y_{n_{k_i}}, z)\to 0$ for some $z\in T$, $z\not=y$. By the continuity of $d'$, $d'(y_{n_{k_i}}, z)\to 0$ and so $z=y$, a contradiction. Thus $d'\in \cM^T$. We conclude that $\cM^T$ is open as a subset of $\cM^*$, and so $\cM^T$ is a $G_\delta$ subset of $C(T^2)$.
	\end{proof}
	\begin{remark}\label{rmk:rhotmtdist}
		Note that $\rho_T(\cM^T, \cM^*\setminus\cM^T) = 1$, by the proof of the proposition.
	\end{remark}
	We therefore obtain that $\cM^T$ is a Baire space. Note that it may happen that $\cM^*\setminus \cM^T\not=\emptyset$; for example if $T=\R$ with its usual topology, then any compatible bounded metric on $\R$ belongs to $\cM^*\setminus \cM^T$. 
	
	We will need a criterion to determine when a linear operator between $\Lip_0$ spaces is a dual operator. Note that the $w^*$-topology on bounded subsets of $\Lipo{M}$ coincides with the topology of pointwise convergence \cite{weaver}*{Theorem 2.37}. Also note that if $M$ is separable then $\free{M}$ is separable. The next result can be deduced easily using an argument in the proof of \cite{apandschd}*{Theorem 2.1}, together with some standard results in Banach space theory.
	
	\begin{lemma}\label{lm:dualcriterion}
		Let $M$ and $N$ be complete metric spaces with base points. A bounded operator $S\colon \Lipo{M}\to\Lipo{N}$ is a dual operator if and only if $S(f_\lambda)\to S(f)$ pointwise whenever $(f_\lambda)_\lambda\subseteq B_{\Lipo{M}}$ is a net converging pointwise to $f\in B_{\Lipo{M}}$. If $M$ is separable then it is enough to consider sequences instead of nets.
	\end{lemma}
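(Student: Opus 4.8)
The plan is to reformulate ``dual operator'' as ``$w^*$-to-$w^*$ continuous'', to test this property one functional at a time, and to reduce the $w^*$-continuity of each such functional to its $w^*$-continuity merely on the unit ball via the Krein--Smulian (Banach--Dieudonné) theorem. At that point the hypothesis can be fed in directly through the identification of the $w^*$-topology with pointwise convergence on bounded sets supplied by \cite{weaver}*{Theorem 2.37}. Recall here that $\Lipo{M}=\free{M}^*$ and $\Lipo{N}=\free{N}^*$.

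First I would observe that $T$ is a dual operator precisely when it is $w^*$-to-$w^*$ continuous, which in turn holds precisely when, for every $\mu\in\free{N}$, the functional $g\mapsto\langle Tg,\mu\rangle$ on $\Lipo{M}$ is $w^*$-continuous. The collection $Z$ of those $\mu\in\free{N}$ for which this holds is plainly a linear subspace, and it is norm-closed: if $\mu_n\to\mu$ in norm with each $\mu_n\in Z$, then $|\langle Tg,\mu_n-\mu\rangle|\leq\nm{T}\,\nm{\mu_n-\mu}$ for $g\in B_{\Lipo{M}}$, so the associated functionals converge uniformly on $B_{\Lipo{M}}$, a uniform limit of $w^*$-continuous functions on the ball is $w^*$-continuous there, and Krein--Smulian then upgrades this to $w^*$-continuity on all of $\Lipo{M}$, giving $\mu\in Z$. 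Since the evaluation functionals $\delta_y$ span a dense subspace of $\free{N}$, it therefore suffices to prove $\delta_y\in Z$ for each $y\in N$, that is, that $g\mapsto(Tg)(y)$ is $w^*$-continuous.

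For this I would invoke Krein--Smulian: a bounded linear functional on $\free{M}^*$ is $w^*$-continuous if and only if its restriction to $B_{\Lipo{M}}$ is $w^*$-continuous. On $B_{\Lipo{M}}$ the $w^*$-topology coincides with pointwise convergence by \cite{weaver}*{Theorem 2.37}, so a net $(f_\lambda)_\lambda\subseteq B_{\Lipo{M}}$ converges $w^*$ to $f$ exactly when it converges pointwise to $f$; the hypothesis then yields $(Tf_\lambda)(y)\to(Tf)(y)$, which is the required continuity on the ball. This proves the ``if'' direction. The ``only if'' direction runs the same translation in reverse: if $T$ is a dual operator and $f_\lambda\to f$ pointwise in $B_{\Lipo{M}}$, then $f_\lambda\to f$ in $w^*$, so $Tf_\lambda\to Tf$ in $w^*$; since $\{Tf_\lambda\}\cup\{Tf\}\subseteq\nm{T}B_{\Lipo{N}}$ is bounded, a further application of \cite{weaver}*{Theorem 2.37} converts this back to pointwise convergence. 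When $M$ is separable, $\free{M}$ is separable, so $(B_{\Lipo{M}},w^*)$ is metrisable and hence sequential; the ball-continuity tested above may then be checked on sequences, and a pointwise-convergent sequence in $B_{\Lipo{M}}$ is precisely a $w^*$-convergent one, which gives the sequential refinement.

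I expect the main obstacle to be the passage from continuity on the unit ball to genuine $w^*$-continuity on all of $\Lipo{M}$: this is not soft and rests essentially on the Krein--Smulian/Banach--Dieudonné theorem, whose use is licensed here by the completeness of the predual $\free{M}$ and by the fact that we only ever test a single linear (hence convex-compatible) functional at a time. Everything else is bookkeeping built on \cite{weaver}*{Theorem 2.37}.
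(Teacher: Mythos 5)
Your proof is correct and follows essentially the same route as the paper's: dual operator $\Leftrightarrow$ $w^*$-$w^*$ continuity, Banach--Dieudonn\'e/Krein--Smulian to pass from continuity on $B_{\Lipo{M}}$ to continuity on all of $\Lipo{M}$, the identification of the $w^*$- and pointwise topologies on bounded sets via \cite{weaver}*{Theorem 2.37}, and $w^*$-metrisability of the ball when $\free{M}$ is separable. The only difference is one of packaging: the paper invokes Banach--Dieudonn\'e at the operator level in a single step, whereas you unpack that step functional-by-functional (testing against the $\delta_y$, then using norm-closedness of your subspace $Z$ and density of $\textnormal{span}\{\delta_y : y\in N\}$ in $\free{N}$), which amounts to proving the operator-level statement rather than citing it.
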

	
	We note that if $(M,d)$ is a complete metric space and $A\subseteq M$ is closed then $\free{A}$ can be identified naturally with a subspace of $\free{M}$ \cite{weaver}*{Theorem 3.7}. The following lemma will be needed in the proof of the main result.
	
	\begin{lemma}\label{lm:closedboundedcompl}
		Let $(M,d)$ be a complete metric space, and suppose $A\subseteq M$ is closed, bounded, and satisfies $d(A, M\setminus A) > 0$. Then $\free{A}$ is complemented in $\free{M}$.
	\end{lemma}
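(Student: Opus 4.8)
The plan is to produce a bounded linear projection from $\free{M}$ onto $\free{A}$ by linearising a suitable Lipschitz retraction of $M$ onto $A$. Write $\delta = d(A,M\setminus A) > 0$, and note that we may assume $M\setminus A\ne\emptyset$, since otherwise $A=M$ and there is nothing to prove. Observe that $A$ is then clopen, and, more importantly for the estimate below, that $A$ is bounded, so $\diam(A)<\infty$. I would fix a common base point $x_0\in A$ for both $\free{A}$ and $\free{M}$; this is permitted because the free space is independent of the base point, and with $x_0\in A$ the inclusion $\iota\colon A\hookrightarrow M$ realises the natural isometric identification of $\free{A}$ with a subspace of $\free{M}$ (via \cite{weaver}*{Theorem 3.7}).

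Next I would define the retraction $r\colon M\to A$ by $r(x)=x$ for $x\in A$ and $r(x)=x_0$ for $x\in M\setminus A$. This map fixes $A$ pointwise and preserves the base point. The one genuine computation is that $r$ is Lipschitz: on pairs of points both lying in $A$ it is an isometry, on pairs both lying in $M\setminus A$ it is constant, and for $x\in A$, $y\in M\setminus A$ we have
\begin{equation*}
d(r(x),r(y)) = d(x,x_0) \le \diam(A) = \frac{\diam(A)}{\delta}\,\delta \le \frac{\diam(A)}{\delta}\,d(x,y),
\end{equation*}
since $d(x,y)\ge\delta$. Hence $\Lip_d(r)\le\max\{1,\diam(A)/\delta\}$. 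This is the step where both hypotheses are genuinely used, and which I expect to be the only real point of the argument: boundedness of $A$ controls the numerator, while the separation $\delta>0$ controls the denominator, and if either failed the constant-off-$A$ retraction would cease to be Lipschitz.

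Finally I would invoke the standard functoriality of the free-space construction (see \cite{weaver}): every base-point-preserving Lipschitz map $\phi\colon M\to N$ induces a bounded linear operator $\hat\phi\colon\free{M}\to\free{N}$ with $\nm{\hat\phi}=\Lip_d(\phi)$, and this assignment respects composition. Thus $r$ induces $\hat r\colon\free{M}\to\free{A}$ and $\iota$ induces the inclusion $\hat\iota\colon\free{A}\to\free{M}$, and by functoriality $\hat r\circ\hat\iota = \widehat{r\circ\iota} = \widehat{\mathrm{id}_A}$, which is the identity on $\free{A}$. Consequently $P := \hat\iota\circ\hat r$ is a bounded linear idempotent on $\free{M}$ with range $\hat\iota(\free{A})=\free{A}$, exhibiting $\free{A}$ as a complemented subspace; everything apart from the Lipschitz estimate is formal, and the construction even yields the explicit bound $\nm{P}\le\max\{1,\diam(A)/\delta\}$.
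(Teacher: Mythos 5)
Your proof is correct, but it runs on the opposite side of the duality from the paper's, and the comparison is instructive. Both arguments hinge on exactly the same map and the same two-case estimate: collapse $M\setminus A$ to the base point, and use boundedness of $A$ for the numerator and $d(A,M\setminus A)>0$ for the denominator to get the constant $\max\bigl(1,\diam(A)/d(A,M\setminus A)\bigr)$. The paper, however, works on the Lipschitz-function side: it defines $G\colon\Lipo{A}\to\Lipo{M}$ by extending $f$ by zero off $A$ --- which is precisely your composition operator $f\mapsto f\circ r$ --- proves the same norm bound, and then needs an extra step that your route avoids entirely: verifying via \Cref{lm:dualcriterion} (the pointwise-convergence criterion for dual operators, resting on the Banach--Dieudonn\'e theorem) that $G$ is a dual operator, so that its predual is the desired projection of $\free{M}$ onto $\free{A}$. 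You instead linearise the retraction $r$ directly through the functoriality of the free-space construction, obtaining $\hat r\colon\free{M}\to\free{A}$ with $\hat r\circ\hat\iota=\mathrm{id}_{\free{A}}$, so the projection $\hat\iota\circ\hat r$ appears with no weak-star continuity check at all. For this particular lemma your argument is the more elementary and self-contained one; the paper's dual-side formulation buys uniformity of method, since the dual-operator criterion is indispensable anyway in \Cref{lm:extensionmetric}, where the operator $E$ is built from a Dugundji-type extension and admits no underlying map between the metric spaces that one could linearise.
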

	\begin{proof}
		Pick a common base point in $A$ for both $\Lipo{A}$ and $\Lipo{M}$. We define the operator $G\colon \Lipo{A}\to \Lipo{M}$ by
		\begin{equation*}
			G(f)(x) = 
			\begin{cases}
				f(x) & \text{ if } x\in A, \\
				0 & \text{ otherwise. } 
			\end{cases}
		\end{equation*}
		Let $f\in B_{\Lipo{A}}$, $x\in A$, and $y\in M\setminus A$. We have $$|G(f)(x)-G(f)(y)| = |f(x)| \leq D(A) \leq \frac{D(A)}{d(A,M\setminus A)} d(x,y),$$ which implies that $\Lip(G(f)) \leq \max\left(1,\frac{D(A)}{d(A,M\setminus A)}\right)$. Clearly, if $(f_\lambda)_\lambda\subseteq B_{\Lipo{A}}$ converges pointwise to $f\in B_{\Lipo{A}}$, then $(G(f_\lambda))_\lambda$ converges pointwise to $G(f)$. By \Cref{lm:dualcriterion}, $G$ is a dual operator, and it is easily seen that its predual is a bounded projection of $\free{M}$ onto $\free{A}$. Therefore $\free{A}$ is complemented in $\free{M}$.
	\end{proof}
	
	If $A\subseteq T$ is a non-empty closed subset, $d_A$ is a (pseudo)metric on $A$, and $d_T$ is a (pseudo)metric on $T$, we say that $d_T$ \emph{extends} $d_A$ if $d_T\restrict_{A^2} = d_A$. We call a linear operator $E\colon\Lipo{A}\to\Lipo{T}$ an \emph{extension operator} if $T(f)\restrict_A = f$ for all $f\in \Lipo{A}$. \Cref{lm:extensionmetric} below enables us to extend a metric defined on $A$ to a metric on $T$ in a way that allows for the existance of a suitable extension operator. Its proof relies on an extension theorem by Dugundji \cite{dugundji}, which has been used in \cite{torunczyk1972simple} again for the purpose of extending metrics from a closed subset of a metrisable space to the whole space.
	
	We set the scene for this lemma by following \cite{dugundji}. Fix a non-empty closed set $A \subseteq T$. By A.~H.~Stone's theorem and the Lindel\"of property, we can find points $x_i$, $i \in \N$, and open neighbourhoods $U_i$ of $x_i$ satisfying $U_i \subseteq B_{r_i}^{d,o}(x_i) \subseteq T\setminus A$, where $r_i=\frac{d(x_i,A)}{3}$, such that $(U_i)_{i\in \N}$ is a locally finite cover of $T\setminus A$. We associate with the cover the usual partition of unity $\lambda_i:T\setminus A \to \R$, $i \in \N$, by setting $$\lambda_i(x) = \frac{d(x,U_i^c)}{\sum_{j\in \N} d(x,U_j^c)}.$$ As $(U_j)_j$ is a locally finite cover of $T\setminus A$, the sum in the denominator is finite and non-zero, so $\lambda_i$ is well-defined, continuous and vanishes outside $U_i$. As $d$ is proper, for each $i$ we can fix $a_i\in A$ such that $d(x_i,A) = d(x_i,a_i)$. Then we can define the map $F\colon C(A)\to C(T)$,
	\begin{equation*}
		F(f)(x) = 
		\begin{cases}
			f(x) & \text{ if } x\in A, \\
			\sum_{i\in \N} \lambda_i(x)f(a_i) & \text{ otherwise.}
		\end{cases}
	\end{equation*}
	Again, by local finiteness, $F(f)$ is well-defined and continuous on $T\setminus A$. That $F(f)$ is continuous on $T$ follows from \cite{dugundji}*{4.3}; for completeness we include an argument for this using the next brief lemma, which we will use again later.
	
	\begin{lemma}\label{lm:continuousextension}
		Let $a \in A$ and $\delta>0$. Then $d(a,a_i)<3\delta$ whenever $d(x,a)<\delta$ and $x \in U_i$.
	\end{lemma}
	
	\begin{proof}
		Let $d(x,a) < \delta$ and $x \in U_i$. As $d(\cdot,A)$ is $1$-Lipschitz, $d(x_i,A)-d(x,A) \leq d(x,x_i) < \frac{1}{3}d(x_i,A)$, so $d(x_i,A) < \frac{3}{2}d(x,A)$. Hence
		\[
		d(a,a_i) \leq d(a,x) + d(x,x_i) + d(x_i,a_i) < \delta + \frac{4}{3}d(x_i,A) < \delta + 2d(x,A) \leq 3\delta. \qedhere
		\]
	\end{proof}
	
	To see that $F(f)$ is continuous at all points of $A$, let $a \in A$ and $\eta>0$. Let $\delta>0$ such that $|f(b)-f(a)|<\eta$ whenever $b \in A$ and $d(a,b)<3\delta$. Now let $d(a,x)<\delta$. If $x \in A$ then $|F(f)(x)-F(f)(a)|=|f(x)-f(a)| < \eta$. If $x \in T\setminus A$ then, by Lemma \ref{lm:continuousextension}, $d(a,a_i)<3\delta$, whence $|f(a_i)-f(a)|<\eta$, for any $i$ satisfying $x \in U_i$ (of which there are finitely many). As $F(f)(x)$ is a convex combination of these $f(a_i)$, we obtain $|F(f)(x)-F(f)(a)|<\eta$. Hence $F(f)$ is continuous on $T$. Finally, it is clear that $F$ is linear.
	
	\begin{lemma}\label{lm:extensionmetric}
		Let $A\subseteq T$ be a non-empty closed set and let $\epsilon \in \left(0,\frac{1}{13}\right)$, $d\in \cM^T$, and $d_A\in \cM^A$. Suppose that $\rho_A (d_A, d\restrict_{A^2}) \leq \epsilon$ and $d(x,A) \leq \epsilon$ for every $x\in M$. Then there exists $\bar d \in \cM^T$ and a dual linear extension operator $E\colon \Lipo{A, d_A}\to \Lipo{T,\bar d}$, such that $\bar d$ extends $d_A$, $\rho_T(\bar d, d) \leq 13\epsilon$, and $\nm E = 1$.
	\end{lemma}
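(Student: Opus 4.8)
The plan is to obtain the extension operator and the metric almost independently, linking them only through the single inequality $\tilde d\ge\sigma$, where $\sigma$ is a continuous pseudometric manufactured from a Dugundji extension of the canonical embedding of $A$.

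First I would build the operator. The map $\kappa\colon A\to\free{A,d_A}$, $\kappa(a)=\delta_a$, is an isometry, since $\nm{\delta_a-\delta_b}=d_A(a,b)$. As $\free{A,d_A}$ is a Banach space and $A$ is closed in the metrisable space $T$, Dugundji's extension theorem yields a continuous $\Phi\colon T\to\free{A,d_A}$ with $\Phi\restrict_A=\kappa$ and $\Phi(T)\subseteq\overline{\mathrm{conv}}\,\kappa(A)$; concretely $\Phi(x)=\sum_\alpha\phi_\alpha(x)\delta_{a_\alpha}$ for $x\notin A$, where $\{\phi_\alpha\}$ is a partition of unity subordinate to a locally finite \emph{canonical} cover of $T\setminus A$ and $a_\alpha\in A$. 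Setting $\sigma(x,y)=\nm{\Phi(x)-\Phi(y)}$ defines a continuous pseudometric on $T$ with $\sigma\restrict_{A^2}=d_A$. I would then define $E\colon\Lipo{A,d_A}\to\Lipo{T,\tilde d}$ by $E(f)(x)=\langle\Phi(x),f\rangle$ (for a common base point $x_0\in A$). This is linear, satisfies $E(f)\restrict_A=f$ because $\Phi(a)=\delta_a$, and obeys $|E(f)(x)-E(f)(y)|\le\nm{\Phi(x)-\Phi(y)}\nm f=\sigma(x,y)\nm f$, so $\Lip_{\tilde d}(E(f))\le\nm f$ as soon as $\tilde d\ge\sigma$. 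Since $\tilde d$ will extend $d_A$ and $E(f)\restrict_A=f$, the reverse inequality $\Lip_{\tilde d}(E(f))\ge\Lip_{d_A}(f)$ is automatic, giving $\nm E=1$. Finally, if $(f_n)\subseteq B_{\Lipo{A,d_A}}$ converges pointwise to $f$, then $f_n\to f$ in $w^*$ in $\free{A,d_A}^*$, whence $E(f_n)(x)=\langle\Phi(x),f_n\rangle\to\langle\Phi(x),f\rangle=E(f)(x)$ for every $x$; as $A$ is separable, \Cref{lm:dualcriterion} shows $E$ is a dual operator.

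It then remains to produce $\tilde d\in\cM^T$ with $\tilde d\ge\sigma$, $\tilde d\restrict_{A^2}=d_A$, and $\rho_T(\tilde d,d)\le13\epsilon$. Here I would take $\tilde d=\max(\sigma,h)$, where $h=\min(e,13\epsilon)$ and $e$ is a compatible metric on $T$ with $e\restrict_{A^2}=d_A$; such an $e$ exists by the classical metric extension theorem, itself proved via Dugundji as in \cite{torunczyk1972simple}. Since the maximum of a pseudometric and a metric is a metric, $\tilde d$ is a metric; on $A^2$ one has $\sigma=d_A$ and $h=\min(d_A,13\epsilon)\le d_A$, so $\tilde d\restrict_{A^2}=d_A$; and because $\tilde d\ge h$ with $h$ compatible while $\sigma$ is continuous, $\tilde d$ induces the topology of $T$. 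The inequality $\tilde d\ge\sigma$ holds by construction.

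The main work, and the only genuinely delicate step, is the estimate $\rho_T(\tilde d,d)\le13\epsilon$. Writing $\psi(x)=d(x,A)\le\epsilon$ and noting that $\rho_A(d_A,d\restrict_{A^2})\le\epsilon<1$ forces $|d_A-d|\le\epsilon$ on $A^2$, I would first show $|\sigma-d|\le13\epsilon$ on all of $T^2$. For $x,y\in T$ pick nearest points $p,q\in A$ (attained, as $d$ is proper), so $d(x,p),d(y,q)\le\epsilon$. Arranging the canonical cover so that every $a_\alpha$ contributing to $\Phi(x)$ satisfies $d(x,a_\alpha)\le 3\psi(x)$, one gets $\nm{\Phi(x)-\delta_p}\le\max_\alpha d_A(a_\alpha,p)\le 5\epsilon$, and likewise for $y$; combined with $d_A(p,q)\le d(x,y)+3\epsilon$ the triangle inequality gives $|\sigma(x,y)-d(x,y)|\le 5\epsilon+3\epsilon+5\epsilon=13\epsilon$, which is precisely where the constant $13$ originates. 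Since moreover $h\le13\epsilon\le d+13\epsilon$ and $\sigma\le d+13\epsilon$, we obtain $d-13\epsilon\le\sigma\le\tilde d\le d+13\epsilon$, so $|\tilde d-d|\le13\epsilon<1$ and hence $\rho_T(\tilde d,d)\le13\epsilon$. Properness of $\tilde d$ is then free: $\tilde d\ge d-13\epsilon$ forces each closed $\tilde d$-ball into a compact $d$-ball, and $\tilde d$ is compatible, so its closed balls are compact. I expect the care needed to realise the canonical cover with covering constant $3$, so that the estimates close up at exactly $13\epsilon$ (consistent with the hypothesis $\epsilon<\frac1{13}$), to be the principal obstacle; everything else is bookkeeping with the triangle inequality.
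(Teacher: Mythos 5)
Your proof is correct, and while its engine is the same as the paper's---Dugundji's extension theorem \cite{dugundji} applied to $A\subseteq T$---you diverge from the paper in how the resulting pseudometric is upgraded to an element of $\cM^T$, and in how membership in $\cM^T$ is verified. Your $\sigma(x,y)=\nm{\Phi(x)-\Phi(y)}$ is exactly the paper's $\hat d(x,y)=\sup_{f\in B_{\Lipo{A,d_A}}}|F(f)(x)-F(f)(y)|$, since $\Lipo{A,d_A}=\free{A,d_A}^*$; your vector-valued formulation of Dugundji is a genuine simplification at one point, because norm-continuity of $\Phi$ gives continuity of $\sigma$ immediately, whereas the paper, working with the scalar extension $F$ and the supremum definition of $\hat d$, has to run an Arzel\`a--Ascoli compactness argument to prove $\hat d\in C(T^2)$. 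The real fork is the metrisation step: the paper sets $\tildem d=\hat d+e$ with the explicit, intrinsically defined pseudometric $e(x,y)=\sup\{|f(x)-f(y)|:f\in B_{\Lipo{T,d}},\ f\restrict_A=0\}$, which vanishes on $A^2$, separates the points that $\hat d$ may fail to separate, and satisfies $\nm e_\infty\le 2\epsilon$ (this is why the paper needs the sharper bound $|\hat d-d|\le 11\epsilon$, leaving room for the additive $2\epsilon$); you instead take $\tildem d=\max\bigl(\sigma,\min(e,13\epsilon)\bigr)$ with $e$ a compatible metric on $T$ extending $d_A$, supplied by Hausdorff's extension theorem (Toru\'nczyk's proof \cite{torunczyk1972simple}), and can afford the looser bound $|\sigma-d|\le 13\epsilon$. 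Your route imports one extra classical ingredient but then gets compatibility of $\tildem d$ with the topology of $T$ essentially for free, and your properness argument ($\tildem d\ge d-13\epsilon$ traps closed $\tildem d$-balls inside compact $d$-balls) is direct, whereas the paper's is self-contained (its $e$ is explicit) and finishes by citing \Cref{rmk:rhotmtdist} together with $\rho_T(\tildem d,d)\le 13\epsilon<1$. Both proofs establish duality of $E$ identically, via the pointwise-convergence criterion of \Cref{lm:dualcriterion}. Two details in your write-up deserve a line each in a final version: the truncation $\min(e,13\epsilon)$ is still a metric inducing the same topology (routine, but it is what makes the $\max$ a metric and lets you ignore that $e$ need be neither proper nor close to $d$), and the covering constant $3$ you posit is safely achievable, since the standard cover by balls $B^{d,o}_{d(z,A)/3}(z)$ with near-nearest points $a_\alpha$ already gives $d(x,a_\alpha)\le 2\,d(x,A)$.
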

	\begin{proof}
		Pick a common base point in $A$ for both $\Lipo{A,d_A}$ and $\Lipo{T,d}$. 
		With the map $F$ above in hand, we define the pseudometric $\hat d\colon T^2\to [0,\infty)$ by $$\hat d(x,y) = \sup_{f\in B_{\Lipo{A,d_A}}} |F(f)(x)-F(f)(y)|$$(from the definition of $F$ it is evident that the supremum above is always finite). It is clear that $\hat d\restrict_{A^2} = d_A$. We will now prove that $|\hat d(x,y) - d(x,y)| \leq 11\epsilon$ for all $x,y\in T$. Given $x,y\in T$, without loss of generality we can assume $x\not\in A$. If $i$ is such that $x\in U_i$ then $d(x,x_i) \leq \frac{\epsilon}{3}$, and as $d(x_i,A) \leq \epsilon$, we have $d(x,a_i) \leq \frac{4}{3}\epsilon$. If $i$ and $j$ are such that $x\in U_i\cap U_j$ then $d(a_i,a_j) \leq d(a_i,x) + d(x,a_j) \leq \frac{8}{3}\epsilon$, and so for $f\in B_{\Lipo{A,d_A}}$, $$|f(a_i)-f(a_j)| \leq d_A(a_i,a_j) \leq d(a_i,a_j) + \epsilon \leq \frac{11}{3}\epsilon.$$ Pick some $i_0$ such that $x\in U_{i_0}$. As $F(f)(x)$ is a convex combination of the numbers in the set $\{f(a_i) : i\in \N, x\in U_i \}$, we have $|F(f)(x) - f(a_{i_0})| \leq \frac{11}{3}\epsilon$. If $y\not\in A$, we can similarly pick $j_0$ such that $d(y,a_{j_0}) \leq \frac{4}{3}\epsilon$ and $|F(f)(y)- f(a_{j_0})| \leq \frac{11}{3}\epsilon$ for any $f\in B_{\Lipo{A,d_A}}$. If $y\in A$ then the previous two inequalities are satisfied with $y$ substituted for $a_{j_0}$. We put $a'=a_{j_0}$ if $y\not\in A$ and $a' = y$ if $y\in A$. We have
		\begin{align*}
			d(y,a') \leq \frac{4}{3}\epsilon, \; \text{ and } \; |F(f)(y)- f(a')| \leq \frac{11}{3}\epsilon,
		\end{align*}
		and the same inequalities hold for $x$ and $a_{i_0}$ in place of $y$ and $a'$, respectively. Now if $g\in B_{\Lipo{A,d_A}}$ is such that $|g(a_{i_0}) - g(a')| = d_A(a_{i_0},a')$, then using the previous inequalities and the triangle inequality twice we have
		\begin{align*}
			\hat d(x,y) &\geq |F(g)(x)-F(g)(y)| \geq |g(a_{i_0}) - g(a')| - \frac{22}{3}\epsilon \\& = d_A(a_{i_0}, a') - \frac{22}{3}\epsilon \geq d(a_{i_0}, a') - \frac{25}{3}\epsilon \geq d(x,y) - 11\epsilon.
		\end{align*} 
		On the other hand, we have
		\begin{align*}
			\hat d(x,y) &= \sup_{f\in B_{\Lipo{A,d_A}}} |F(f)(x)-F(f)(y)| \leq \sup_{f\in B_{\Lipo{A,d_A}}} |f(a_{i_0})-f(a')| + \frac{22}{3}\epsilon \\&= d_A(a_{i_0}, a') + \frac{22}{3}\epsilon \leq d(a_{i_0}, a') + \frac{25}{3}\epsilon \leq d(x,y) + 11\epsilon.
		\end{align*}
		Therefore $|\hat d(x,y) - d(x,y)| \leq 11\epsilon$. From this we deduce that $\hat d$ is a pseudometric on $T$ which extends $d_A$.
		
		Now define the pseudometric $e\colon T^2\to [0,\infty)$ by $$e(x,y) = \sup\{|f(x)-f(y)| : f\in B_{\Lipo{T,d}}, f\restrict_A = 0\}.$$ Given $x,y\in T$, pick $a,b\in A$ such that $d(x,a), d(y,b) \leq \epsilon$, and note that $$e(x,y) \leq e(x,a) + e(a,b) + e(b,y) \leq d(x,a) + d(b,y) \leq 2\epsilon.$$ Therefore $\nm e _\infty \leq 2\epsilon$. We define $\bar d = \hat d + e$. It is easily checked that $\bar d$ extends $d_A$ and $\bar d(x,y) > 0$ whenever $x\not=y$, so $\bar d$ is a metric on $T$. We also clearly have $\rho_T(\bar d, d) \leq 13\epsilon$.
		
		We now check that $\bar d\in \cM^T$. First we will show that $\bar d\in C(T^2)$. Because $\bar d$ satisfies the triangle inequality, it is sufficient to show that if $(x_n)_n\subseteq T$ is such that $d(x_n, x)\to 0$ for some $x\in T$, then $\bar d(x_n,x)\to 0$. As $e \leq d$, all we need to show is that $\hat{d}(x_n,x) \to 0$. Suppose, for a contradiction, that there exist $(x_n)_n\subseteq T$, $x \in T$ and $\eta>0$ such that $d(x_n,x)\to 0$ and $\hat{d}(x_n,x) > \eta$ for all $n$. Then there exist functions $f_n\in B_{\Lipo{A,d_A}}$ such that $|F(f_n)(x_n)-F(f_n)(x)| > \eta$ for all $n$.
		
		At this point we must consider two cases. We address the case $x \in A$ first. Given that $d_A$ and $d\restrict_{A^2}$ are equivalent metrics on $A$, we can pick $\delta>0$ such that
		\[
		d_A(x,a) < \eta \quad\text{whenever }a \in A \text{ and }d(a,x) < \delta.
		\]
		Fix $n$ such that $d(x_n,x) < \frac{1}{3}\delta$. Either $x_n \in A$ or not. If $x_n \in A$ then
		\[
		|F(f_n)(x_n)-F(f_n)(x)| = |f_n(x_n)-f_n(x)| \leq d_A(x_n,x) < \eta,
		\]
		which isn't the case. Instead, suppose $x_n \in T\setminus A$. By \Cref{lm:continuousextension}, $d(x,a_i) < \delta$ whenever $x_n \in U_i$. Hence $|f_n(x)-f_n(a_i)| \leq d_A(x,a_i) < \eta$ for all such $i$, of which there are finitely many. As $F(f_n)(x_n)$ is a convex combination of these $f_n(a_i)$, we obtain $|F(f_n)(x_n)-F(f_n)(x)| < \eta$, which again violates our assumption.
		
		In the second case we assume $x \in T\setminus A$. By Alaoglu's theorem, $B_{\Lip_0(A,d_A)}$ is $w^*$-compact, and as $(A,d_A)$ is separable, $B_{\Lip_0(A,d_A)}$ is metrisable. The $w^*$-topology on bounded subsets of $\Lip_0(A,d_A)$ coincides with the topology of pointwise convergence, as mentioned just before \Cref{lm:dualcriterion}. Therefore, by taking a subsequence if necessary, we can assume that $f_n \to f \in B_{\Lip_0(A,d_A)}$ pointwise. Given that $(U_i)$ is locally finite, let $U \subseteq T\setminus A$ be a neighbourhood of $x$ that intersects just finitely many of the $U_i$, which we denote by $U_{k_1},\ldots,U_{k_m}$. We then have $F(g)(y) = \sum_{i=1}^m \lambda_{k_i}(y) g(a_i)$ for all $y\in U$ and $g\in C(A)$. This implies that $F(f_n) \to F(f)$ uniformly on $U$. For large enough $n$, $x_n \in U$, and
		\begin{align*}
			&\;|F(f_n)(x_n) - F(f_n)(x)| \\ \leq& \;|F(f_n)(x_n) - F(f)(x_n)| + |F(f)(x_n) - F(f)(x)| + |F(f)(x) - F(f_n)(x)| \\\leq& \; 2\nm{(F(f_n) - F(f))\restrict_U}_\infty + |F(f)(x_n) - F(f)(x)| \to 0,
		\end{align*} 
		as $n\to\infty$, a contradiction. Hence we conclude that $\bar d \in C(T^2)$.
		Then $\bar d\in \cM^*$, and so $\bar d\in \cM^T$ by \Cref{rmk:rhotmtdist}, since $\rho_T(\bar d, d) \leq 13\epsilon < 1$.
		
		From the inequality $\bar d \geq \hat d$ and the definition of $\hat d$, it follows that $F$ maps $\Lipo{A,d_A}$ into $\Lipo{T,\bar d}$, and $\Lip_{\bar d}(F(f))\leq \Lip_{d_A}(f)$ for $f\in \Lipo{A,d_A}$. Hence we can define $E\colon \Lipo{A,d_A} \to \Lipo{T,\bar d}$ by $E(f) = F(f)$. To show that $E$ is a dual operator, suppose that $(f_n)_n \subseteq B_{\Lipo{A,d_A}}$ converges to $f\in B_{\Lipo{A,d_A}}$ pointwise. Then from the definition of $F$ it is clear that $E(f_n)\to E(f)$ pointwise. By \Cref{lm:dualcriterion}, $E$ is a dual operator.
	\end{proof}
	
	The next result only makes partial use of \Cref{lm:extensionmetric}; the latter will be used fully in the proofs of \Cref{thm:mapdense,thm:failapdense}.
		
	\begin{proposition}\label{prop:non-sep}
		If $T$ is not compact then $\cM^T$ is non-separable in the topology of uniform convergence.
	\end{proposition}
		
	\begin{proof}
		As $(T,\alpha)$ is complete but not compact, there exists $\epsilon \in (0,\frac{1}{13})$ and a sequence $(x_n)_n \subseteq T$ (with $x_0$ the base point) such that $\alpha(x_n,x_m) \geq \epsilon$ for distinct $m,n \geq 0$. Define the closed sets $C=\{x_n \,:\, n \geq 0\}$ and
		\[
		A = C \cup \{x \in T \,:\, \alpha(x,C) \geq \epsilon\}.
		\]
		Given $E \subseteq C$ having at least two elements, we define a pseudometric $d'_E$ on $A$ by
		\[
		d'_E(x,y) = \begin{cases}
			0 & \text{if $x=y$ or $x,y \in A\setminus C$,}\\
			\epsilon &\text{if $x,y \in E$ are distinct,}\\
			\frac{1}{2}\epsilon &\text{otherwise.}
		\end{cases}
		\]
		The symmetry of $d'_E$ is immediate and the triangle inequality is straightforward to check. Also, it is clear that $d'_E \leq \alpha\restrict_{A^2}$. Therefore, the metric $\alpha\restrict_A+d'_E$ is Lipschitz equivalent to $\alpha\restrict_{A^2}$ on $A$; in particular it belongs to $\cM^A$. Thus we can apply \Cref{lm:extensionmetric} to obtain an extension $\bar{d}_E \in \cM^T$ of $\alpha\restrict_{A^2}+d'_E$. Now let $E,E' \subseteq C$ be distinct and have at least two elements. Without loss of generality there exists $x \in E\setminus E'$, and picking $y \in E\setminus\{x\}$ yields
		\[
		\rho_T(\bar{d}_E,\bar{d}_{E'}) \geq \min(1,|\bar{d}_E(x,y)-\bar{d}_{E'}(x,y)|) = \textstyle\frac{1}{2}\epsilon.
		\]
		Therefore, the uncountable family of all such metrics $\bar{d}_E$ is $\rho_T$-uniformly separated.
	\end{proof}
		
	We next state (a simplified version of) a result from \cite{talimpublished}, together with two remarks, that are needed in the proof of Theorem \ref{thm:failapdense}.
	
	\begin{lemma}[{cf. \cite{talimpublished}*{Corollary 2.3}}]\label{lem:talim}
		Let $K$ be homeomorphic to the Cantor set, $d,e \in \cM^K$ and $\epsilon>0$. Then there exists a partition $\{K_1,\ldots,K_m\}$ of clopen subsets of $K$ and a metric $d_K\in \cM^K$, such that
		\begin{align}\label{eq:dk2minusdkf}
			\nm{d_K - d}_\infty < \epsilon,
		\end{align}
		and $(K_i,d_K)$ is proportional to $(K,e)$ for all $i=1,\ldots,m$.
	\end{lemma}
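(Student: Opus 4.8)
The plan is to chop $K$ into finitely many clopen pieces of small $d$-diameter, install a suitably scaled copy of $(K,e)$ on each piece, and then glue these copies together through distinguished basepoints so that the resulting metric agrees with $d$ up to the chosen tolerance. The delicate point is that one cannot simply keep $d$ on the off-diagonal blocks, so the gluing formula has to be chosen with the triangle inequality in mind.

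First I would fix $\delta\in(0,\epsilon/4)$ and, using that $K$ is compact with a basis of clopen sets, produce a partition $\{K_1,\ldots,K_m\}$ into non-empty clopen sets with $\diam_d(K_i)<\delta$ for each $i$: cover $K$ by clopen sets of $d$-diameter $<\delta$, pass to a finite subcover, disjointify (finite Boolean combinations of clopen sets are clopen and do not increase diameter), and discard empty members. Each $K_i$ is then non-empty, clopen (hence compact), perfect (as $K$ has no isolated points and $K_i$ is open), totally disconnected and metrisable, so by Brouwer's characterisation it is homeomorphic to $K$; fix a homeomorphism $\phi_i\colon K_i\to K$ and a basepoint $p_i\in K_i$. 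Choosing a constant $c_i\in(0,\delta/\diam_e(K))$, set $\sigma_i(x,y)=c_i\,e(\phi_i(x),\phi_i(y))$, a metric on $K_i$ that is proportional to $(K,e)$ and satisfies $\diam_{\sigma_i}(K_i)<\delta$. I would then define
\[
d_K(x,y)=\begin{cases}\sigma_i(x,y) & x,y\in K_i,\\[2pt] \sigma_i(x,p_i)+d(p_i,p_j)+\sigma_j(p_j,y) & x\in K_i,\ y\in K_j,\ i\neq j.\end{cases}
\]

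The main obstacle is verifying that $d_K$ is genuinely a metric. The naive choice $d_K=d$ off the diagonal blocks fails the triangle inequality, since two clopen pieces may be arbitrarily $d$-close while the intra-piece distances need not match $d$; the basepoint formula above is designed to circumvent exactly this. I would check the triangle inequality in the three cases (all three points in one piece; two in one piece and one in another; all in distinct pieces). After cancelling the common $\sigma$-terms, the first two cases collapse to the triangle inequality for $\sigma_i$ inside a single piece, while the three-piece case collapses to $d(p_i,p_l)\le d(p_i,p_j)+d(p_j,p_l)$ among the basepoints together with the nonnegativity of the remaining $\sigma$-terms.

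Finally, closeness and membership. For $x,y$ in a common piece $K_i$ one has $|d_K(x,y)-d(x,y)|\le\sigma_i(x,y)+d(x,y)<2\delta$, while for $x\in K_i,\ y\in K_j$ with $i\neq j$ I would bound
\[
|d_K(x,y)-d(x,y)|\le \sigma_i(x,p_i)+\sigma_j(p_j,y)+|d(p_i,p_j)-d(x,y)|<4\delta<\epsilon,
\]
using $\diam_d(K_i),\diam_d(K_j)<\delta$; hence $\nm{d_K-d}_\infty<\epsilon$. Since each block $K_i\times K_j$ is clopen in $K^2$ and $d_K$ is continuous on it, $d_K\in C(K^2)$; as $K$ is compact, $d_K$ is automatically proper, and the continuous bijection $\mathrm{id}\colon K\to(K,d_K)$ from a compact space to a Hausdorff one is a homeomorphism, so $d_K\in\cM^K$. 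By construction $(K_i,d_K)=(K_i,\sigma_i)$ is proportional to $(K,e)$ via $\phi_i$, which completes the proof.
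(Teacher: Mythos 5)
Your proof is correct and establishes exactly what the lemma asserts. Note first that the paper does not prove this lemma in-text: it is imported from \cite{talim}*{Corollary 2.3}, and the remark that follows it records two additional facts about the construction given there. Your strategy coincides with that construction in its main steps --- partition $K$ into non-empty clopen pieces of small $d$-diameter, identify each piece with the Cantor set via Brouwer's characterisation, and install a rescaled copy of $(K,e)$ on each piece --- but the gluing is genuinely different. The cited construction declares the cross-piece distances to be \emph{constants}, $d_K(x,y) = D(K_i,K_j)$ for $x\in K_i$, $y\in K_j$, $i\ne j$ (this is \eqref{eq:dkxyequals}), whereas you thread them through basepoints, $d_K(x,y)=\sigma_i(x,p_i)+d(p_i,p_j)+\sigma_j(p_j,y)$. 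Both formulas give a genuine metric and satisfy \eqref{eq:dk2minusdkf}, so the lemma holds either way; your gluing has the mild advantage that the triangle inequality reduces transparently to that of $d$ and of the $\sigma_i$. What the constant-distance gluing buys is precisely the rigidity recorded in the remark: property \eqref{eq:dkxyequals} fails for your metric (for instance $d_K(p_i,p_j)=d(p_i,p_j)$, which is in general strictly smaller than $D(K_i,K_j)$), and that property is used crucially in Section 3 when verifying the triangle inequality for $d_C$, in the step $d(y,x')\le D(K_i,K_j)=d_K(x,y)$. Your partition does satisfy \eqref{eq:Dkileqepsilon2}, since your pieces have $d$-diameter less than $\epsilon/4$. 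So your argument is a valid proof of the lemma as stated, but it could not be substituted for the cited construction in the proof of \Cref{thm:main} without reworking the definition of $d_C$ or its constants.
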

	
	\begin{remark}
		We point out two facts about $\{K_1,\ldots,K_m\}$ and $d_K$ above, which follow ultimately from how they are constructed in \cite{talimpublished}*{Lemma 2.1}:
		\begin{enumerate}
			\item the partition $\{K_1,\ldots,K_m\}$ satisfies
			\begin{align}\label{eq:Dkileqepsilon2}
				D(K_i) < \frac{\epsilon}{2} \;\text{ for all } \; i=1,\ldots,m;
			\end{align}
			\item the metric $d_K$ satisfies
			\begin{align}\label{eq:dkxyequals}
				d_K(x,y) = D(K_i,K_j) \;\text{ whenever }\; x\in K_i, y\in K_j\text{ and } i\not=j.
			\end{align}
		\end{enumerate}
	\end{remark}
	
	We finish this section by defining a class of `flattening operators' for Lipschitz functions. The idea behind these operators is to replace a given Lipschitz function $f$ defined on a ball of a closed subset $A \subseteq T$ by a new one, defined on $A$, that agrees with $f$ on a (usually much) smaller ball of $A$, and vanishes outside the original ball, in such a way that the Lipschitz constant of the new function is not that much greater than that of $f$. These are analogous to the flattening operators used in \cite{smithtalim}*{Proposition 2.12} and are based on an idea found in the proof of \cite{godefroykalton}*{Proposition 5.1}. Fix constants $R>r>0$ and define $\mu=\mu_{r,R}\colon [0, \infty) \to [0,1]$ by
	\[
	\mu(t) =
	\begin{cases}
		1 & \text{if } t \leq r, \\
		\displaystyle \frac{\log(R/t)}{\log(R/r)} & \text{if } r < t < R, \\
			0 & \text{otherwise. } \\
	\end{cases}
	\]
	Now let $d\in\cM^T$ and let $A\subseteq T$ be a closed subset such that $x_0\in A$. Then define a linear operator $\Phi = \Phi^d_{A,r,R}\colon\Lipo{A\cap B_R^d, d} \to \Lipo{A, d}$ by
	\begin{equation}\label{eqn:flattening}
		\Phi(f)(x) =
		\begin{cases}
			\mu(d(x_0,x))f(x) & \text{if } x\in A\cap B_R^d, \\
			0 & \text{otherwise. } \\
		\end{cases}
	\end{equation}
	It is immediate that $\Phi(f)$ agrees with $f$ on $A\cap B_r^d$ and vanishes outside $A\cap B_R^d$.
		
	\begin{proposition}[{cf. \cite{smithtalim}*{Proposition 2.12}}]
		We have $\nm{\Phi} \leq 1 + \frac{1}{\log(R/r)}$.
	\end{proposition}
		
	\begin{proof}
		By elementary means, $|\mu(s)-\mu(t)| \leq |s-t|/(s\log(R/r))$ whenever $0 < s \leq t$. Given $f \in \Lipo{A\cap B_R^d, d}$, extend it to $g \in \Lipo{T,d}$ while preserving its Lipschitz constant by McShane's Extension Theorem \cite{weaver}*{Theorem 1.33}. Given $x,y \in A$ satisfying $s:=d(x_0,x) \leq d(x_0,y) =: t$,
		\begin{align*}
			|\Phi(f)(x)-\Phi(f)(y)| &\leq |\mu(s)-\mu(t)|\,|g(x)| + |\mu(t)|\,|g(x)-g(y)|\\
			&\leq \frac{\Lip(g)|s-t|}{\log(R/r)} + \Lip(g)d(x,y) \leq \left(\frac{1}{\log(R/r)} + 1\right)\Lip(f)d(x,y). \qedhere 
		\end{align*}
	\end{proof}
	
	\section{Proofs of the main results}\label{sec_main}
	
	We are now ready to give the proofs of the three main results.
	
	\begin{proof}[Proof of \Cref{thm:mapdense}]
		Let $\nu$ be a complete compatible metric on $\cM^T$. Let $d\in\cM^T$ and $\epsilon\in (0,1)$. We will inductively define an increasing sequence (with respect to inclusion) of countable subsets $(A_n)_{n=1}^\infty$, $A_n\subseteq T$, metrics $(d_n)_{n=1}^\infty \subseteq \cM^T$, and extension operators $(E_n)_{n=1}^\infty$, $E_n\colon\Lipo{A_n,d_n} \to \Lipo{T,d_n}$, such that $d_{n+1}$ and $d_n$ agree on $A_{n+1}$, $\nu(d_n, d_{n+1}) \leq \frac{\epsilon}{2^{n+1}}$, and $\nm{E_n} = 1$ for all $n\in\N$.
		
		It will help to start the induction at $n=0$ by setting $A_0=\{x_0\}$ and $d_0=d$. Given $A_n$ and $d_n$, we shall define $\epsilon_{n+1}$, $A_{n+1}$, $d_{n+1}$ and $E_{n+1}$. First, find $\epsilon_{n+1} \in (0,\min(\frac{1}{n+1}, \frac{1}{13}))$ such that $\nu(d',d_n) \leq\frac{\epsilon}{2^{n+1}}$ whenever $d'\in\cM^T$ and $\rho_T(d', d_n) \leq 13\epsilon_{n+1}$. Let $A_{n+1} \subseteq T$ be a countable set such that $A_n\subseteq A_{n+1}$, every compact subset of $A_{n+1}$ is finite and $d_n(x,A_{n+1}), d(x,A_{n+1}) \leq \epsilon_{n+1}$ for every $x\in T$. It is clear that $A_{n+1}$ is closed. We apply \Cref{lm:extensionmetric} to $A_{n+1}$, $\epsilon_{n+1}$, $d_n$ and $d_n\restrict_{A_{n+1}^2}$ to obtain a metric $d_{n+1} \in \cM^T$ and an extension operator $E_{n+1}\colon \Lipo{A_{n+1}, d_n\restrict_{A^2_{n+1}}} \to\Lipo{T,d_{n+1}}$ such that $d_{n+1}$ agrees with $d_n$ on $A_{n+1}$, $\rho_T(d_{n+1},d_n) \leq 13\epsilon_{n+1}$ and $\nm{E_{n+1}} = 1$. It follows that $\nu(d_{n+1},d_n) \leq \frac{\epsilon}{2^{n+1}}$. This completes the induction.
		
		As $d(x,A_n) \leq \epsilon_n \leq \frac{1}{n}$ for every $x\in T$ and $n\in\N$, $\bigcup_{n=1}^\infty A_n$ is dense in $T$. The sequence $(d_n)_n$ is $\nu$-Cauchy and hence converges uniformly to a metric $\bar d\in \cM^T$, which satisfies $\nu(\bar d, d) \leq \epsilon$. For $n\in\N$, define $G_n\colon\Lipo{A_n,\bar d}\to\Lipo{A_{n+1},\bar d}$ by $G_n(f) = E_n(f)\restrict_{A_{n+1}}$. As $d_k$ agrees with $d_n$ on $A_{n+1}$ whenever $k\geq n+1$, $\bar d$ agrees with both $d_{n+1}$ and $d_n$ on $A_{n+1}$. This implies that $\nm{G_n} = 1$. Now for $k\geq n+1$, define $H_n^k\colon \Lipo{A_n,\bar d}\to\Lipo{A_k,\bar d}$ by $H_n^k = G_{k-1}\circ\ldots\circ G_n$. Clearly $\nm{H_n^k} = 1$ and $H_n^k(f)$ agrees with $H_n^m (f)$ on $A_{\min(k,m)}$ whenever $m,k\geq n+1$ and $f\in\Lipo{A_n,\bar d}$. Finally, define $H_n\colon\Lipo{A_n,\bar d}\to\Lipo{T,\bar d}$ as follows. Pick $f\in\Lipo{A_n,\bar d}$. First, we define $H_n(f)$ on $\bigcup_{i=1}^\infty A_i$ by $H_n(f)(x) = H_n^k(f)(x)$, where $k = \max(n+1, i)$ and $i\in\N$ is such that $x\in A_i\setminus A_{i-1}$. Evidently $H_n(f)$ has Lipschitz constant at most $\Lip(f)$. We then extend $H_n(f)$ to $T$ by continuity and density of $\bigcup_{i=1}^\infty A_i$. It is clear that $H_n$ is an extension operator and $\nm{H_n}=1$.
		
		Now for every $n\in\N$ define the restriction operator $R_n\colon\Lipo{T,\bar d}\to\Lipo{A_n\cap B^{\bar d}_{n^2},\bar d}$ by $R_n(f) = f\restrict_{A_n\cap B^{\bar d}_{n^2}}$, and, having in mind \eqref{eqn:flattening}, the operator $P_n\colon\Lipo{T,\bar d} \to \Lipo{T,\bar d}$ by $$P_n = H_n\circ \Phi^{\bar d}_{A_n,n,n^2}\circ R_n.$$ Then $P_n$ is a finite-rank operator of norm $\nm{P_n} \leq 1+\frac{1}{\log n}$, such that $P_n(f)$ agrees with $f$ on $A_n\cap B^{\bar d}_n$ for every $f\in\Lipo{T,\bar d}$. 
		
		The operator $R_n$ is a dual operator by \Cref{lm:dualcriterion}, since if $(f_k)_k\subseteq\Lipo{T,\bar d}$ converges pointwise to $f\in \Lipo{T,\bar d}$, then clearly $R_n(f_k)\to R_n(f)$ pointwise. The map $H_n\circ \Phi_{A_n,n,n^2}$ is also a dual operator because its domain is a finite-dimensional space and thus automatically $w^*$-$w^*$ continuous. We thus obtain that $P_n$ is a dual operator. Now let $x\in T$ and $\xi > 0$. As $\bigcup_{n=1}^\infty A_n$ is dense in $T$, we can choose $n_0\in\N$ so that there exists some $y\in A_{n_0}\cap B^{\bar d}_{n_0}$ such that $\bar d(x,y) \leq \xi$. Then for any $n\geq n_0$ and $f\in B_{\Lipo{T,\bar d}}$, $P_n(f)(y) = f(y)$ and
		\begin{align*}
				|P_n(f)(x) - f(x)| &\leq |P_n(f)(x) - P_n(f)(y)| + |f(y)-f(x)|\\
				&\leq \left(2+\frac{1}{\log n}\right)\bar d(x,y) \leq \left(2+\frac{1}{\log n}\right)\xi.
		\end{align*}
		It follows that $P_n(f)(x)\to f(x)$ uniformly in $f\in B_{\Lipo{T,\bar d}}$. If $Q_n$ is the predual operator to $P_n$ for every $n\in\N$, we can easily conclude that for every $x\in T$, $\nm{Q_n(\delta_x) - \delta_x} \to 0$ as $n\to\infty$. As $\nm{Q_n} \to 1$, we have $\nm{Q_n(\nu) - \nu}\to 0$ for every $\nu\in \free{T,\bar d}$. Therefore $\free{T,\bar d}$ has the MAP and $\bar d\in \cA^{T,1}$. As $\nu(d,\bar d)\leq\epsilon$ and $d$ and $\epsilon$ were arbitrary, $\cA^{T,1}$ is dense in $\cM^T$.
	\end{proof}

	\begin{proof}[Proof of Theorem \ref{thm:failapdense}]
		Suppose $T$ is uncountable. Let $d\in \cM^T$ and $\epsilon \in \left(0,\frac{1}{13}\right)$ be arbitrary. As $T$ is completely metrisable and separable, it is a Polish space. By \cite{kechris}*{Theorem 6.4} $T$ contains a non-empty perfect subset, as $T$ is uncountable, and by \cite{kechris}*{Theorem 6.2}, $T$ contains a subset $K$ homeomorphic to the Cantor set. By \cite{hajeklancienpernecka}*{Corollary 2.2} there exists a metric $e_K^f\in \cA^K_f$. We apply \Cref{lem:talim} to $K$ and $d\restrict_{K^2},e_K^f\in \cM^K$ to obtain a partition $\{K_1,\ldots,K_m\}$ of clopen subsets of $K$ and a metric $d_K\in \cM^K$ satisfying \eqref{eq:dk2minusdkf} -- \eqref{eq:dkxyequals}, and such that $(K_i,d_K)$ is proportional to $(K,e_K^f)$ for all $i=1,\ldots,m$. As proportional metric spaces have isometrically isomorphic free spaces, $\free{K_1,d_K}$ fails the AP. By \Cref{lm:closedboundedcompl}, $\free{K_1,d_K}$ is complemented in $\free{K,d_K}$. Hence $\free{K,d_K}$ fails the AP. 
		
		Let $C = K \cup \{x\in T : d(x,K) \geq \epsilon\}$. Obviously $C$ is closed. Now define $d_C\colon C^2\to [0,\infty)$ by
		\begin{equation*}
			d_C(x,y) = 
			\begin{cases}
				d_K(x,y) & \text{ if } x,y\in K \\
				D(x,K_i)+\frac{\epsilon}{2} & \text{ if } x\in C\setminus K, \text{ and } y\in K_i, \\
				D(y,K_i) + \frac{\epsilon}{2} & \text{ if } x\in K_i, \text{ and } y\in C\setminus K, \\
				d(x,y) & \text{ if } x,y\in C\setminus K.
			\end{cases}
		\end{equation*}
		We prove that $d_C\in \cM^C$. Obviously symmetry holds and $d_C(x,y)=0$ if and only if $x=y$. To show the triangle inequality, pick $x,y,z\in C$. If $x,y,z\in K$ or $x,y,z\in C\setminus K$ then the inequality is obvious. Suppose $x,y\in K$ and $z\in C\setminus K$. Then, using \eqref{eq:dk2minusdkf}, $$d_C(x,y) = d_K(x,y) < d(x,y) + \epsilon \leq d(x,z) + d(z,y) + \epsilon \leq d_C(x,z) + d_C(z,y).$$ If $x,y\in K_i$ for some $i$ then trivially $d_C(x,z) \leq d_C(x,y) + d_C(y,z)$ and likewise $d_C(y,z) \leq d_C(y,x) + d_C(x,z)$. Suppose $x\in K_i$ and $y\in K_j$ where $i\not=j$. Let $x'\in K_i$ be such that $D(z,K_i) = d(z,x')$. Then
		\begin{align*}
			d_C(x,z) &= d(z,x') + \frac{\epsilon}{2} \leq d(z,y) + d(y,x') + \frac{\epsilon}{2} \leq d(y,x') + d_C(y,z) \\&\leq D(K_i,K_j) + d_C(y,z) = d_K(x,y) + d_C(y,z) = d_C(x,y) + d_C(y,z),
		\end{align*}
		where the penultimate equality follows from \eqref{eq:dkxyequals}. The inequality $d_C(y,z) \leq d_C(y,x) + d_C(x,z)$ can be shown similarly.
		
		Suppose now that $x\in K$ and $y,z\in C\setminus K$. If $x\in K_i$, let $x'\in K_i$ be such that $D(y,K_i) = d(y,x')$. We have $$d_C(x,y) = d(y,x') + \frac{\epsilon}{2} \leq d(y,z) + d(z,x') + \frac{\epsilon}{2} \leq d_C(x,z) + d_C(z,y).$$ We can show similarly that $d_C(x,z) \leq d_C(x,y) + d_C(y,z)$. Finally, $$d_C(y,z) = d(y,z) \leq d(y,x) + d(x,z) \leq d_C(y,x) + d_C(x,z).$$ Therefore $d_C$ is a metric on $C$. It is also clear that $d_C$ is compatible with the topology on $C$, since $d_C\restrict_{K^2}$ and $d_C\restrict_{(C\setminus K)^2}$ are compatible with the topologies on $K$ and $C\setminus K$, respectively, $d(K,C\setminus K) > 0$ and $d_C(K,C\setminus K) \geq \frac{\epsilon}{2}$. As $d_C$ coincides with $d$ on $C\setminus K$ and $K$ is compact, it is easy to see that $d_C$ is proper. Therefore $d_C \in \cM^C$.
		
		We will now show that $\rho_C(d_C, d\restrict_{C^2}) \leq \epsilon$. Indeed, if $x\in C\setminus K$ and $y\in K_i$ for some $i$, let $y'\in K_i$ be such that $D(x,K_i) = d(x,y')$. Then \[|d_C(x,y) - d(x,y)| = \left|d(x,y') -d(x,y) + \frac{\epsilon}{2}\right| \leq d(y,y') + \frac{\epsilon}{2} \leq D(K_i) + \frac{\epsilon}{2} < \epsilon \tag*{by \eqref{eq:Dkileqepsilon2}.}\] The inequalities for $x,y\in K$ and $x,y\in C\setminus K$ follow from \eqref{eq:dk2minusdkf} and the definition of $d_C$, respectively.
		
		By \Cref{lm:extensionmetric}, there exists $\bar d\in \cM^T$ and a dual linear extension operator $E\colon\Lipo{C,d_C} \to \Lipo{T,\bar d}$ such that $\bar d$ extends $d_C$, $\rho_T(\bar d, d) \leq 13\epsilon$ and $\nm E = 1$. The predual of $E$ is a projection of $\free{T,\bar d}$ onto $\free{C,d_C}$. By \Cref{lm:closedboundedcompl}, $\free{K,d_K}$ is complemented in $\free{C,d_C}$. Hence $\free{C,d_C}$, and consequently $\free{T,\bar d}$, fail the AP. As $\rho_T(\bar d, d) \leq 13\epsilon$ and $d$ and $\epsilon$ were arbitrary, we obtain that $\cA_f^T$ is dense in $\cM^T$.
	\end{proof}

	\begin{proof}[Proof of \Cref{prop:residual}]
		Being zero-dimensional (and locally compact), $T$ has a basis of compact open sets, thus for every $n\in\N$ there is a compact open subset $T_n\subseteq T$ such that $B_n^{\alpha} \subseteq T_n$. For each $n,k\in\N$, let $\cP^k_n$ be a partition of $T_n$ into compact open subsets of $\alpha$-diameter less than $\frac{1}{k}$, and define $$U_n^k = \left\{d\in\cM^T \; : \; \chi_n^k(d) := \max_{C,C'\in \cP^k_n, C \neq C'} \frac{D(C,C')}{d(C, C')} < 1 + \frac{1}{k}\right\}.$$ Given distinct $C,C'\in \cP^k_n$, the assignment $d \mapsto \frac{D(C,C')}{d(C, C')}$ is continuous. Therefore $U_n^k$ is open. Fix $n,k_0\in\N$. We will prove that $\bigcup_{k=k_0}^\infty U_n^k$ is dense in $\cM^T$. Pick $d\in\cM^T$ and $\epsilon \in (0,1)$. Choose $k\geq k_0$ so that the $d$-diameter of the elements of $\cP_n^k$ is less than $\frac{\epsilon}{2}$. Let $C_1,\ldots,C_p$ be the elements of $\cP_n^k$. Define $\bar d\colon T^2\to [0,\infty)$ by
		\begin{equation*}
			\bar d(x,y) = 
			\begin{cases}
				d(x,y) & \text{ if } x,y\in C_i \text{ for some } i \text{, or if } x, y \in T\setminus T_n, \\
				D(C_i,C_j) & \text{ if } x\in C_i, y\in C_j \text{ and } i\not=j, \\
				D(x,C_i) & \text{ if } y\in C_i\text{ for some } i \text{ and } x\in T\setminus T_n, \\
				D(y,C_i) & \text{ if } x\in C_i \text{ for some } i \text{ and } y\in T\setminus T_n. \\ 
			\end{cases}
		\end{equation*}
		As in the proof of \cite{talimpublished}*{Lemma 2.1}, we can show that $\bar d$ is a metric, $\rho_T(\bar d, d) < \epsilon$, and $\bar d\restrict_{T_n^2}$ is compatible with the topology of $T_n$. Clearly, $\bar d|_{(T\setminus T_n)^2}$ is also compatible with the topology on $T\setminus T_n$. As $\bar d \geq d$, $\bar d(T_n, T\setminus T_n) \geq d(T_n, T\setminus T_n) > 0$, and so both $T_n$ and $T\setminus T_n$ are $\bar d$-open. Therefore $\bar d$ is compatible with the topology of $T$. As $\bar d$ coincides with $d$ on $T\setminus T_n$, the former is also proper and thus belongs to $\cM^T$. Since $\bar d(C_i,C_j) = \bar D(C_i,C_j)$, $\bar d\in U_n^k$. Therefore $\bigcup_{i=k_0}^\infty U_n^i$ is dense as claimed. 
		
		To finish the proof, it suffices to show that 
		\begin{align}\label{eq:inclusion}
			\bigcap_{n=1}^\infty \bigcup_{k=n}^\infty U_n^k \subseteq \cA^{T,1}.
		\end{align} 
		Pick $d$ in the set on the left-hand side of \eqref{eq:inclusion}. Then there is a sequence of numbers $(k_n)_n\subseteq \N$ such that $k_n\geq n$ and $d\in U^{k_n}_n$. For each $n\in\N$, suppose $\cP_n^{k_n} = \{ C_n^1,\ldots,C_n^{p_n} \}$, and pick points $x_n^i\in C_n^i$, $i=1,\ldots,p_n$ (put $x_n^i = x_0$ whenever $x_0\in C_n^i$ for some $i$). Define the finite-rank operator $S_n\colon\Lipo{T_n,d}\to\Lipo{T_n,d}$ by $T_n(f)(x) = f(x_n^i)$ whenever $x\in C_n^i$, $i=1,\ldots,p_n$. As in the proof of \cite{talimpublished}*{Lemma 3.4} we can show that
		\[
		\nm{S_n} \leq \chi_n^{k_n}(d) \leq 1 + \frac{1}{k_n} \leq 1 + \frac{1}{n}.
		\]
		Suppose $x\in T$. For all sufficiently large $n$, let $i_n\in\N$ be such that $x\in C_n^{i_n}$. As $\alpha$-$\diam(C_n^{i_n}) \to 0$ as $n\to\infty$ and all $C_n^{i_n}$ are contained in some fixed compact subset of $T$, we have $D(C_n^{i_n})\to 0$. As $S_n(f)(x_n^{i_n}) = f(x_n^{i_n})$ for all $f\in\Lipo{T_n,d}$, we can show that $|S_n(f)(x) - f(x)| \to 0$ as $n\to\infty$ uniformly in $f\in B_{\Lipo{T,d}}$ in the same way as at the end of the proof of \Cref{thm:mapdense}.
		
		Let $(m_n)_n\subseteq \N$ be an increasing sequence such that $B_{n^2}^d\subseteq B_{m_n}^\alpha$. Then $B_{n^2}^d\subseteq T_{m_n}$ for every $n\in\N$. Define the restriction operators $R_n\colon\Lipo{T,d}\to\Lipo{T_n,d}$, $R_n(f) = f\restrict_{T_n}$, and $\bar R_n\colon\Lipo{T_{m_n},d}\to\Lipo{B_{n^2}^d ,d}$, $\bar R_n(f) = f\restrict_{B_{n^2}^d}$. Finally, again recalling \eqref{eqn:flattening}, define $P_n\colon\Lipo{T,d}\to\Lipo{T,d}$ by $$P_n = \Phi^d_{T,n,n^2} \circ \bar R_n \circ S_{m_n}\circ R_{m_n}.$$ It is clear that $P_n$ is a finite-rank operator such that $\nm{P_n} \leq (1+\frac{1}{\log n})(1+\frac{1}{m_n})$ and that $P_n(f)$ agrees with $S_{m_n}(f\restrict_{T_{m_n}})$ on $B^d_n$ for every $f\in\Lipo{T,d}$. Therefore $|P_n(f)(x) - f(x)| \to 0$ as $n\to\infty$ uniformly in $f\in B_{\Lipo{T,d}}$. As shown in the last paragraph of the proof of \Cref{thm:mapdense}, we can show that $P_n$ is a dual operator, since $R_{n^2}$ is a restriction operator and $S_{n^2}$ is a composition of a restriction operator and an operator whose domain is a finite-dimensional space. As $(m_n)_n$ is increasing, $\nm{P_n}\to 1$. Again, as at the end of the proof of \Cref{thm:mapdense}, the sequence of preduals to the $P_n$ implies that $\free{T,d}$ has the MAP. Therefore $d\in\cA^{T,1}$, and so the inclusion \eqref{eq:inclusion} holds.
	\end{proof}
	
	We finish by posing an open problem that arises naturally from this work.
		
		\begin{problem}
			If $T$ is uncountable and not zero-dimensional, what more can be said about the topological nature of the sets $\cA^{T,1}$ and $\cA_f^T$? For example, is one of these sets residual? We do not know the answer to this even in the compact case.
	\end{problem}
	
	\bibliography{document}
	
\end{document}